\documentclass[a4paper,12pt]{article}
\usepackage{mathtools}
\usepackage{amsfonts}
\usepackage{amsthm}
\usepackage{graphicx}
\usepackage{framed}
\usepackage{float}
\usepackage{upgreek}
\usepackage{enumerate}
\usepackage{MnSymbol}
\usepackage{amsbsy}
\usepackage{fullpage}
\usepackage{bbm}
\usepackage{url}
\theoremstyle{plain}
\newtheorem{thm}{Theorem}
\newtheorem{lemma}[thm]{Lemma}

\newtheorem*{l4'}{Lemma E4'}

\theoremstyle{definition}
\newtheorem{defi}[thm]{Definition}
\newtheorem{rmk}[thm]{Remark}

\theoremstyle{remark}

\hyphenpenalty=100000
\tolerance=3000

\begin{document}

\title{A note on the ``conditional triviality'' property for regular conditional distributions}
\author{Julian Newman}

\maketitle

\begin{abstract}
\noindent We make the simple, and yet deep, observation that a regular conditional distribution (rcd) almost surely trivialises the conditioning $\sigma$-algebra if and only if there exists a ``measurable selection'' of regular conditional distributions for almost all the measures arising out of the original rcd.
\end{abstract}

\noindent Let $(X,\Sigma)$ be a countably generated measurable space, and let $\mathcal{P}$ be the set of probability measures on $(X,\Sigma)$, equipped with its natural $\sigma$-algebra $\mathfrak{K}:=\sigma(\mu \mapsto \mu(A) : A \in \Sigma)$. Given a probability measure $\rho \in \mathcal{P}$ and a sub-$\sigma$-algebra $\mathcal{G}$ of $\Sigma$, we will say that $\mathcal{G}$ is \emph{$\rho$-trivial} if $\rho(G) \in \{0,1\}$ for all $G \in \mathcal{G}$.
\\ \\
From now on, let $\rho$ be a probability measure on $(X,\Sigma)$, and let $\mathcal{G}$ be a sub-$\sigma$-algebra of $\Sigma$. (The restriction of $\rho$ to $\mathcal{G}$, which is a probability measure on $(X,\mathcal{G})$, is denoted $\rho|_{\mathcal{G}}$.)

\begin{defi}
A \emph{regular conditional distribution of $\rho$ given $\mathcal{G}$} is a $(\mathcal{G},\mathfrak{K})$-measurable function $\rho^\mathcal{G}:X \to \mathcal{P}$ such that for all $A \in \Sigma$ and $G \in \mathcal{G}$,
\[ \rho(A \cap G) \ = \ \int_G \rho^\mathcal{G}(x)(A) \, \rho(dx). \]
\end{defi}

\noindent Since $(X,\Sigma)$ is countably generated, it is easy to show (using the $\pi$-$\lambda$ theorem) that if such a function $\rho^\mathcal{G}:X \to \mathcal{P}$ exists then it is $\rho|_{\mathcal{G}}$-essentially unique. It is also well-known that if $(X,\Sigma)$ is standard, then such a function $\rho^\mathcal{G}$ \emph{must} exist (see e.g.~Theorem~33.3 of [1]).
\\ \\
From now on, $\rho^\mathcal{G}$ denotes a rcd of $\rho$ given $\mathcal{G}$.

\begin{rmk}
Given any set $S$ and a function $g:X \to S$, we will say that $g$ is \emph{$\rho$-almost constant} if there exists $c \in S$ such that $g(x)=c$ for $\rho$-almost all $x \in X$. It is clear (but still worth pointing out explicitly) that the following are equivalent: (i)~$\mathcal{G}$ is $\rho$-trivial; (ii)~the constant map $x \mapsto \rho$ is a rcd of $\rho$ given $\mathcal{G}$; (iii)~$\rho^\mathcal{G}:X \to \mathcal{P}$ is $\rho$-almost constant.
\end{rmk}

\begin{lemma}
For any $E \in \mathcal{G} \otimes \Sigma$,
\[ \rho( x \in X : (x,x) \in E ) \ = \ \int_X \int_X \mathbbm{1}_E(x,y) \, \rho^\mathcal{G}(x)(dy) \, \rho(dx). \]
\end{lemma}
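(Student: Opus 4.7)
The plan is a routine Dynkin $\pi$-$\lambda$ / monotone class argument: show that the two sides, viewed as set-functions of $E \in \mathcal{G} \otimes \Sigma$, are finite measures that agree on the $\pi$-system of measurable rectangles $G \times A$ with $G \in \mathcal{G}$, $A \in \Sigma$, and then invoke the uniqueness of extension.

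First I would verify the identity on rectangles. For $E = G \times A$, the left-hand side reduces to
\[ \rho(\{x : (x,x) \in G \times A\}) \ = \ \rho(G \cap A), \]
while the right-hand side, using $\mathbbm{1}_{G \times A}(x,y) = \mathbbm{1}_G(x)\mathbbm{1}_A(y)$, equals
\[ \int_X \mathbbm{1}_G(x)\, \rho^\mathcal{G}(x)(A) \, \rho(dx) \ = \ \int_G \rho^\mathcal{G}(x)(A)\,\rho(dx), \]
which is $\rho(A \cap G)$ by the defining property of $\rho^\mathcal{G}$.

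Next I would check that both sides make sense and are $\sigma$-additive as functions of $E$. For the left-hand side, one verifies that $\{E \in \mathcal{G} \otimes \Sigma : \{x : (x,x) \in E\} \in \Sigma\}$ is a $\sigma$-algebra containing the rectangles (note $\{x : (x,x) \in G \times A\} = G \cap A \in \Sigma$), and then $E \mapsto \rho(\{x : (x,x) \in E\})$ is a pushforward measure. For the right-hand side, the key measurability point is that for each fixed $A \in \Sigma$, the map $x \mapsto \rho^\mathcal{G}(x)(A)$ is $(\mathcal{G},\mathcal{B}(\mathbb{R}))$-measurable by the very definition of $\mathfrak{K}$ and the measurability of $\rho^\mathcal{G}$; a standard monotone class argument then extends this to show that $x \mapsto \rho^\mathcal{G}(x)(E_x)$ is measurable for every $E \in \mathcal{G} \otimes \Sigma$ (here $E_x = \{y : (x,y) \in E\}$). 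Countable additivity in $E$ of the right-hand side then follows from the monotone convergence theorem applied twice.

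Having established that both sides are finite measures on $\mathcal{G} \otimes \Sigma$ agreeing on the rectangles, which form a $\pi$-system generating $\mathcal{G} \otimes \Sigma$, the $\pi$-$\lambda$ theorem gives equality on all of $\mathcal{G} \otimes \Sigma$. The only mildly delicate step is the measurability bookkeeping for the right-hand side; everything else is routine.
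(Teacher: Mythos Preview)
Your proof is correct and follows essentially the same route as the paper: verify the identity on rectangles $G \times A$ using the defining property of $\rho^{\mathcal{G}}$, then extend to all of $\mathcal{G} \otimes \Sigma$ via the $\pi$--$\lambda$ theorem (the paper phrases this as showing the class of good sets is a $\lambda$-system, whereas you phrase it as two finite measures agreeing on a generating $\pi$-system, but these are the same argument). Your version is simply more explicit about the measurability bookkeeping that the paper leaves implicit.
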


\begin{proof}
For any $G \in \mathcal{G}$ and $A \in \Sigma$,
\begin{align*}
\rho( x \in X : (x,x) \in G \times A ) \ &= \ \rho(A \cap G) \\
&= \ \int_G \rho^\mathcal{G}(x)(A) \, \rho(dx) \\
&= \ \int_X \int_X \mathbbm{1}_{G \times A}(x,y) \, \rho^\mathcal{G}(x)(dy) \, \rho(dx).
\end{align*}
\noindent So then, the collection $\mathcal{E}$ of sets $E \in \mathcal{G} \otimes \Sigma$ satisfying the desired result includes all sets of the form $G \times A$ (with $G \in \mathcal{G}$ and $A \in \Sigma$). But also, by the monotone convergence theorem, $\mathcal{E}$ is a $\lambda$-system. Hence the $\pi$-$\lambda$ theorem gives that $\mathcal{E}$ is the whole of $\mathcal{G} \otimes \Sigma$.
\end{proof}

\begin{defi}
We will say that \emph{$\mathcal{G}$ is conditionally trivial under $\rho$} if $\mathcal{G}$ is $\rho^\mathcal{G}(x)$-trivial for $\rho$-almost all $x \in X$.
\end{defi}

\begin{rmk}
Note that this is weaker than saying that $\rho^\mathcal{G}(x)|_{\mathcal{G}}=\delta_x|_{\mathcal{G}}$ for $\rho$-almost all $x \in X$. (For more on this latter condition, see [2], noting also the corrections in [3].) Conditional triviality has been considered in [4] and [5], and is also the key concept in the measure-theoretic versions of the ``ergodic decomposition theorem'' (see e.g.~Theorem~10.1 of [6]). The following example (essentially taken from [4]) demonstrates that conditional triviality does not automatically hold: Let $X=[0,1] \times \{0,1\}$ (with $\Sigma=\mathcal{B}([0,1]) \otimes 2^{\{0,1\}}$), let $\rho$ be a probability measure on $X$ whose projection onto $[0,1]$ is atomless, and let $\mathcal{G} \subset \Sigma$ be the smallest $\sigma$-algebra containing both $\mathcal{B}([0,1]) \otimes \{\emptyset,\{0,1\}\}$ and all the $\Sigma$-measurable $\rho$-null sets. Then given any non-Dirac probability measure $m$ on the binary set $\{0,1\}$, the function $\rho^\mathcal{G}:(x,i) \mapsto \delta_x \otimes m$ is a rcd of $\rho$ given $\mathcal{G}$. Clearly, for any $x \in [0,1]$, $\{(x,0)\} \in \mathcal{G}$; and yet, for all $(x,i) \in X$,
\[ \rho^\mathcal{G}(x,i)(\{(x,0)\}) \ = \ m(0) \, \in \, (0,1). \]
\end{rmk}

\begin{defi}
A function $\rho^\mathcal{G}_2:X \times X \to \mathcal{P}$ will be called an \emph{iterated rcd of $\rho$ given $\mathcal{G}$} if for $\rho$-almost every $x \in X$, the map $y \mapsto \rho^\mathcal{G}_2(x,y)$ is a rcd of $\rho^\mathcal{G}(x)$ given $\mathcal{G}$.
\end{defi}

\noindent (Note that this definition is independent of the version of $\rho^\mathcal{G}$ that we are working with.)

\begin{rmk}
If $(X,\Sigma)$ is standard then the axiom of choice yields that there must exist an iterated rcd of $\rho$ given $\mathcal{G}$ (although we do not automatically know about the measurability properties thereof).
\end{rmk}

\begin{thm}
$\mathcal{G}$ is conditionally trivial under $\rho$ if and only if there exists an iterated rcd of $\rho$ given $\mathcal{G}$ which is $(\mathcal{G} \otimes \mathcal{G},\mathfrak{K})$-measurable.
\end{thm}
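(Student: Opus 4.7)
The direction $(\Rightarrow)$ is immediate: given conditional triviality of $\mathcal{G}$ under $\rho$, set $\rho^{\mathcal{G}}_2(x,y) := \rho^{\mathcal{G}}(x)$. This is $(\mathcal{G}\otimes\mathcal{G},\mathfrak{K})$-measurable (it depends $(\mathcal{G},\mathfrak{K})$-measurably on $x$ alone), and the Remark applied to $\rho^{\mathcal{G}}(x)$ in place of $\rho$ shows that, for $\rho$-almost every $x$, the constant map $y\mapsto\rho^{\mathcal{G}}(x)$ is indeed a rcd of $\rho^{\mathcal{G}}(x)$ given $\mathcal{G}$.

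For $(\Leftarrow)$, suppose $\rho^{\mathcal{G}}_2$ is a $(\mathcal{G}\otimes\mathcal{G},\mathfrak{K})$-measurable iterated rcd. By the Remark applied to $\rho^{\mathcal{G}}(x)$, it suffices to show that, for $\rho$-almost every $x$, the $\mathcal{P}$-valued map $y\mapsto\rho^{\mathcal{G}}_2(x,y)$ is $\rho^{\mathcal{G}}(x)$-almost constant. I would first extend the Lemma by monotone convergence and linearity to obtain, for every nonnegative $(\mathcal{G}\otimes\Sigma)$-measurable function $h$,
\[
\int h(x,x)\,\rho(dx) \; = \; \int\int h(x,y)\,\rho^{\mathcal{G}}(x)(dy)\,\rho(dx).
\]
Applying this with $h(x,y) = \mathbbm{1}_G(x)\,\rho^{\mathcal{G}}_2(x,y)(A)$ (for $G\in\mathcal{G}$, $A\in\Sigma$) and using the rcd property of $\rho^{\mathcal{G}}_2(x,\cdot)$ on the right-hand side, one finds that $x\mapsto\rho^{\mathcal{G}}_2(x,x)$ satisfies the defining property of a rcd of $\rho$ given $\mathcal{G}$. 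Since it is also $(\mathcal{G},\mathfrak{K})$-measurable (being the composition of $\rho^{\mathcal{G}}_2$ with the diagonal embedding), $\rho|_\mathcal{G}$-essential uniqueness of rcds yields the ``diagonal identity'' $\rho^{\mathcal{G}}_2(x,x)=\rho^{\mathcal{G}}(x)$ for $\rho$-almost every $x$.

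Now fix a countable $\pi$-system $(A_n)\subseteq\Sigma$ generating $\Sigma$. For each $n$, applying the extended Lemma to the $(\mathcal{G}\otimes\mathcal{G})$-measurable function $h(x,y) = \rho^{\mathcal{G}}_2(x,y)(A_n)^2$ and then invoking the diagonal identity gives
\[
\int\int \rho^{\mathcal{G}}_2(x,y)(A_n)^2\,\rho^{\mathcal{G}}(x)(dy)\,\rho(dx) \; = \; \int \rho^{\mathcal{G}}(x)(A_n)^2\,\rho(dx).
\]
Since $\int\rho^{\mathcal{G}}_2(x,y)(A_n)\,\rho^{\mathcal{G}}(x)(dy)=\rho^{\mathcal{G}}(x)(A_n)$ pointwise (the rcd property with outer $G=X$), Jensen's inequality makes the integrand on the left at least $\rho^{\mathcal{G}}(x)(A_n)^2$ pointwise in $x$, with equality iff $y\mapsto\rho^{\mathcal{G}}_2(x,y)(A_n)$ is $\rho^{\mathcal{G}}(x)$-almost constant. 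The above equality of integrals therefore forces this constancy for $\rho$-almost every $x$. Intersecting the $\rho$-full-measure sets across the countable family $(A_n)$, and using that two probability measures on $\Sigma$ agreeing on a generating $\pi$-system coincide, I conclude that, for $\rho$-almost every $x$, $y\mapsto\rho^{\mathcal{G}}_2(x,y)$ is $\rho^{\mathcal{G}}(x)$-almost constant as an element of $\mathcal{P}$, as required.

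The main obstacle, I expect, will be recognising the right object to integrate --- namely the second moment of $y\mapsto\rho^{\mathcal{G}}_2(x,y)(A_n)$ --- and realising that the diagonal identity is precisely what turns that integral into one manifestly saturating Jensen's inequality. Once these two ingredients are in place, the proof is essentially bookkeeping.
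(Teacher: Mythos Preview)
Your proof is correct, but the paper's converse is considerably slicker. Instead of extending the Lemma to functions, establishing the diagonal identity, and then running a second-moment/Jensen argument, the paper applies the Lemma \emph{once}, to the indicator of the set
\[
E \ := \ \{(x,y):\rho^\mathcal{G}_2(x,y)=\rho^\mathcal{G}_2(x,x)\} \ = \ \bigcap_{A\in\mathcal{A}}\{(x,y):\rho^\mathcal{G}_2(x,y)(A)=\rho^\mathcal{G}_2(x,x)(A)\},
\]
which is $(\mathcal{G}\otimes\mathcal{G})$-measurable (hence in $\mathcal{G}\otimes\Sigma$) and contains the entire diagonal. The Lemma then gives $\int\!\int\mathbbm{1}_E(x,y)\,\rho^\mathcal{G}(x)(dy)\,\rho(dx)=1$ directly, so for $\rho$-almost every $x$ the section $E_x$ has full $\rho^\mathcal{G}(x)$-measure, i.e.\ $y\mapsto\rho^\mathcal{G}_2(x,y)$ is $\rho^\mathcal{G}(x)$-almost surely equal to $\rho^\mathcal{G}_2(x,x)$. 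This delivers the almost-constancy without ever identifying the constant and without any analytic inequality. Your route has the merit of proving the diagonal identity $\rho^\mathcal{G}_2(x,x)=\rho^\mathcal{G}(x)$ along the way, which is a pleasant fact in its own right; but the paper shows that neither this identity nor the variance computation is needed --- the ``set version'' of the Lemma, applied to a well-chosen $E$, already does all the work. (A minor quibble: your ``pointwise'' equality $\int\rho^\mathcal{G}_2(x,y)(A_n)\,\rho^\mathcal{G}(x)(dy)=\rho^\mathcal{G}(x)(A_n)$ holds only for $\rho$-almost every $x$, since that is where $\rho^\mathcal{G}_2(x,\cdot)$ is guaranteed to be a rcd; this does not affect the argument.)
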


\begin{proof}
If $\mathcal{G}$ is conditionally trivial under $\rho$ then (as in Remark~2) the map $(x,y) \mapsto \rho^\mathcal{G}(x)$ is an iterated rcd of $\rho$ given $\mathcal{G}$, and it is obviously $(\mathcal{G} \otimes \mathcal{G},\mathfrak{K})$-measurable. Conversely, suppose we have an iterated rcd $\rho^\mathcal{G}_2:X \times X \to \mathcal{P}$ of $\rho$ given $\mathcal{G}$ which is $(\mathcal{G} \otimes \mathcal{G},\mathfrak{K})$-measurable. For each $A \in \Sigma$ the set
\[ E_A \ := \ \{ (x,y) \in X \times X \, : \, \rho^\mathcal{G}_2(x,y)(A) = \rho^\mathcal{G}_2(x,x)(A) \} \]
\noindent is $(\mathcal{G} \otimes \mathcal{G})$-measurable. So, letting $\mathcal{A} \subset \Sigma$ be a countable $\pi$-system generating $\Sigma$, the set
\[ E \ := \ \{ (x,y) \in X \times X \, : \, \rho^\mathcal{G}_2(x,y) = \rho^\mathcal{G}_2(x,x) \} \ = \ \bigcap_{A \in \mathcal{A}} E_A \]
\noindent is $(\mathcal{G} \otimes \mathcal{G})$-measurable. Obviously $E$ includes the whole of the diagonal in $X \times X$, and therefore (by Lemma~3)
\[ \int_X \int_X \mathbbm{1}_E(x,y) \, \rho^\mathcal{G}(x)(dy) \, \rho(dx) \ = \ 1. \]
\noindent It follows that for $\rho$-almost all $x \in X$,
\[ \rho^\mathcal{G}(x)(y \in X : \rho^\mathcal{G}_2(x,y)=\rho^\mathcal{G}_2(x,x)) \ = \ 1. \]
\noindent So, for $\rho$-almost every $x \in X$, the map $y \mapsto \rho^\mathcal{G}_2(x,y)$ is $\rho^\mathcal{G}(x)$-almost constant. Thus (by Remark~2 applied to $\rho^\mathcal{G}(x)$) $\mathcal{G}$ is $\rho^\mathcal{G}(x)$-trivial for $\rho$-almost all $x \in X$.
\end{proof}

\begin{rmk}
In view of the above theorem, the fact that conditional triviality does not always hold serves to demonstrate the ``non-measurability'' (or, in heuristic terms, the ``non-constructivity'') of the operation of ``conditioning with respect to a $\sigma$-algebra''. (Indeed, conditional probabilities may be obtained as either Radon-Nikodym derivatives or orthogonal projections in $\mathcal{L}^2$, both of which are ``non-constructively'' proven to exist.) This point can be made slightly more precise as follows: Suppose $\mathcal{G}$ has the property that there exists a $(\mathfrak{K} \otimes \mathcal{G},\mathfrak{K})$-measurable function $C^\mathcal{G}:\mathcal{P} \times X \to \mathcal{P}$ such that for all $\mu \in \mathcal{P}$ the map $x \mapsto C^\mathcal{G}(\mu,x)$ is a rcd of $\mu$ given $\mathcal{G}$; then the map $(x,y) \mapsto C^\mathcal{G}(C^\mathcal{G}(\rho,x),y)$ is clearly a $(\mathcal{G} \otimes \mathcal{G},\mathfrak{K})$-measurable iterated rcd of $\rho$ given $\mathcal{G}$, and therefore $\mathcal{G}$ is conditionally trivial under $\rho$.
\end{rmk}

\subsection*{Acknowledgement}

The author would like to thank Prof Patrizia Berti and Prof Pietro Rigo for their helpful comments and kind encouragement.

\subsection*{References}

[1] Billingsley, P., \emph{Probability and Measure} (3rd ed.), Wiley, New York (1995).
\\ \\
\textrm{[2]} Seidenfeld, T., Schervish, M. J., Kadane, J. B., Improper Regular Conditional Distributions, \emph{Ann. Prob.} \textbf{29}(4), pp1612--1624 (2001).
\\ \\
\textrm{[3]} Seidenfeld, T., Schervish, M. J., Kadane, J. B., Correction: Improper Regular Conditional Distributions, \emph{Ann. Prob.} \textbf{34}(1), pp423--426 (2006).
\\ \\
\textrm{[4]} Berti, P., Rigo, P., 0-1 Laws for Regular Conditional Distributions, \emph{Ann. Prob.} \textbf{35}(2), pp649--662 (2007).
\\ \\
\textrm{[5]} Berti, P., Rigo, P., A Conditional 0-1 Law for the Symmetric $\sigma$-field, \emph{J. Theor. Probab.} \textbf{21}(3), pp517--526 (2008).
\\ \\
\textrm{[6]} Gray, R. M., \emph{Probability, Random Processes, and Ergodic Properties} (2nd ed.), Springer, Dordrecht (2009).

\end{document}